\newcommand{\thmabove}{7.5pt}
\newcommand{\thmbelow}{0pt}
\newcommand{\proofbelow}{0pt}
\let\latexcite=\cite
\def\cite{\nolinebreak\latexcite}
\let\latexref=\ref
\def\ref{\nolinebreak\latexref}
\newcommand{\ClaimName}[1]{\label{clm:#1}}
\newcommand{\EquationName}[1]{\label{eq:#1}\text{}}
\newcommand{\LemmaName}[1]{\label{lem:#1}}
\newcommand{\SectionName}[1]{\label{sec:#1}}
\newcommand{\Claim}[1]{Claim~\ref{clm:#1}}
\newcommand{\Equation}[1]{Eq.\:\eqref{eq:#1}}
\newcommand{\Lemma}[1]{Lemma~\ref{lem:#1}}
\newcommand{\Section}[1]{Section~\ref{sec:#1}}
    \newtheoremstyle{mythmstyle}
      {\thmabove}   % Space above
      {\thmbelow}   % Space below
      {}            % Font of theorem body (e.g., \itshape)
      {}            % Indent amount (empty = no indent, \parindent = para indent)
      {\bfseries}   % Thm head font
      {. }          % Punctuation after thm head
      {2.5pt}       % Space after thm head (\newline = linebreak)
      {\thmname{#1}\thmnumber{ #2}\thmnote{ \normalfont (#3)}}   % Thm head spec
    \theoremstyle{mythmstyle}
    \newtheorem{theorem}{Theorem}
    \newtheorem{claim}[theorem]{Claim}
    \newtheorem{lemma}[theorem]{Lemma}
\newcommand{\afterproof}{\hfill $\blacksquare$ \par \vspace{\proofbelow}}
\renewenvironment{proof}{\noindent\textbf{Proof.}\,}{\afterproof}
\newenvironment{proofof}[1]{\noindent\textbf{Proof} \,(of #1).\,}{\afterproof}
\renewcommand{\th}{\ifmmode{^{\textrm{th}}}\else{\textsuperscript{th}\ }\fi}
\newcommand{\set}[1]{\left \{ #1 \right \}}                     % Set notation: { ... }
\newcommand{\setst}[2]{\left\{\; #1 \,:\, #2 \;\right\}}        % Set notation: { ... | ... }
\newcommand{\sumstack}[1]{\sum_{\substack{#1}}}
\newcommand{\union}{\cup}                                       
\newcommand{\intersect}{\cap}
\newcommand{\Ind}{\operatorname{Ind}}
\newcommand{\qdown}{\breve{q}}
\newcommand{\medcap}{{\textstyle \bigcap}}
\newcommand{\Pdown}{\breve{P}}
\newcommand{\InclNbr}{\Gamma^+}
\renewcommand{\Pr}{\mathbb{P}}
\begin{document}

\title{Short proofs for generalizations of the Lov\'asz Local Lemma: \\
Shearer's condition and cluster expansion}
\author{Nicholas J. A. Harvey \and Jan Vondr\'ak}
\date{}
\maketitle

\begin{abstract}
The Lov\'asz Local Lemma is a seminal result in probabilistic combinatorics.
It gives a sufficient condition on a probability space and a collection of events
for the existence of an outcome that simultaneously avoids all of those events.
Over the years, more general conditions have been discovered under which
the conclusion of the lemma continues to hold.
In this note we provide short proofs of two of those more general results:
Shearer's lemma and the cluster expansion lemma, in their ``lopsided'' form.
We conclude by using the cluster expansion lemma to prove that
the symmetric form of the local lemma holds with probabilities bounded by $1/ed$,
rather than the bound $1/e(d+1)$ required by the traditional proofs.
\end{abstract}

%%%%%%%%%%%%%%%%%%%%%%%%%%%%%%%%%%%%%%%%%%%%%%%%%%%%%%%%%%%%%%%%%%%%%%%%%%%%%

\section{Introduction}

The Lov\'{a}sz Local Lemma, due to Erd\H{o}s and Lov\'{a}sz \cite{ErdosLovasz},
has become a very useful tool for showing the existence of objects with special properties.
It is particularly important in combinatorics, theoretical computer science,
and also in number theory.

The statement of the LLL involves a probability space on $\Omega$ with events $E_1,\ldots,E_n$.
The desired conclusion is that $\Pr[ \cap_i \overline{E_i} ] > 0$.
The hypothesis is that there exists a graph on vertex set $[n] = \set{1,\ldots,n}$ such that
\begin{enumerate}
\item[\rm (i)] non-neighbors in the graph have, in a certain sense, limited dependence, and
\item[\rm(ii)] the probabilities of the events must satisfy a certain upper bound.
\end{enumerate}
In the original formulation of the LLL \cite{ErdosLovasz}, 
condition (i) is that each event must be independent from its non-neigbors,
and condition (ii) is that each event must have probability at most $1/4d$, where $d$ 
is the maximum degree in the graph.

Over the years, new formulations of condition (i) were discovered,
of which a very general one is stated below as inequality \eqref{eq:Dep}.
Instead of requiring independence between non-neighbors,
it allows arbitrary dependencies, as long as one can establish a useful upper bound on the
probability of $E_i$ conditioned on any set of its non-neighboring events not occurring.
We believe this condition first appeared in a paper by Albert, Frieze and Reed \cite{Albert},
and is sometimes referred to as the ``lopsided" version of the LLL.
(This is more general than the condition used by Erdos and Spencer~\cite{ErdosSpencer}.)

Several new formulations of condition (ii) have been proposed over the years,
notably by Spencer \cite{Spencer75,Spencer77} and by Shearer \cite{Shearer}.
Shearer's condition is actually optimal, assuming that the graph is undirected.
Unfortunately Shearer's condition is difficult to use in applications, so
researchers have also studied weaker conditions that are easier to use.
One of the most useful of those is the ``cluster expansion'' condition, due to Bissacot
et~al.~\cite{Bissacot}.

In this note, we present short, self-contained proofs of the LLL 
in which condition (i) is formalized using \eqref{eq:Dep}, as in Albert et~al.~\cite{Albert},
and condition (ii) is formalized using either Shearer's condition \cite{Shearer}
or the cluster expansion condition \cite{Bissacot}.
\Section{ShearerShort} gives a short proof for Shearer's condition.
Our proof follows the line of Shearer's original argument,
although we believe our exposition is simpler and more direct.
\Section{cluster} gives a short proof for the cluster expansion condition.
Whereas Bissacot et al.\ used analytic methods inspired 
by statistical physics, we found a short combinatorial inductive argument. 
This combinatorial proof originally appeared in Section 5.7 of \cite{HV-arxiv}, 
but since that may be somewhat difficult to find, we reproduce it here.
To conclude, we show that the cluster expansion condition implies the
near-optimal $p \leq \frac{1}{ed}$ condition for the symmetric LLL.

%%%%%%%%%%%%%%%%%%%%%%%%%%%%%%%%%%%%%%%%%%%%%%%%%%%%%%%%%%%%%%%%%%%%%%%%%%%%%%%%
\section{Shearer's Lemma}
\SectionName{ShearerShort}

The following result is the ``lopsided Shearer's Lemma", a generalization of the LLL
combining conditions from Albert et~al.~\cite{Albert} and Shearer \cite{Shearer}.
This formulation also appears in \cite{Knuth}.
Let $\Gamma(i)$ denote the neighbors of vertex $i$ and let $\InclNbr(i) = \Gamma(i) \cup \set{i}$.
Let $\Ind=\Ind(G)$ denote the collection of all independent sets in the graph $G$.

\begin{lemma}[lopsided Shearer's Lemma]
\LemmaName{extShearer}
Suppose that $G$ is a graph and $E_1,\ldots,E_n$ events such that
\begin{equation}
\EquationName{Dep}
\Pr[E_i \mid \medcap_{j \in J} \overline{E_j}] ~\leq~ p_i
\qquad\forall i \in [n] ,\, J \subseteq [n] \setminus \InclNbr(i).
\end{equation}
For each $S \subseteq [n]$, define
$$\qdown_S ~=~ \qdown_S(p) ~=~ \sumstack{I \subseteq S \\ I \in \Ind(G)} (-1)^{|I|} \prod_{i \in I} p_i.$$
If $\qdown_S \geq 0$ for all $S \subseteq [n]$, then for each $A \subseteq [n]$, we have
$$ \Pr[\medcap_{j \in A} \overline{E_j}] \geq \qdown_A.$$
\end{lemma}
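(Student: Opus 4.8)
The plan is to prove, by induction on $|A|$, a statement slightly stronger than the one asserted: that $\Pr[\medcap_{j\in A}\overline{E_j}]\ge\qdown_A$ and, in addition, that for every vertex $k\notin A$ with $\qdown_A>0$,
$$\Pr\big[E_k \,\big|\, \medcap_{j\in A}\overline{E_j}\big] ~\leq~ p_k\cdot\frac{\qdown_{A\setminus\Gamma(k)}}{\qdown_A}.$$
The conditional estimate is the real engine; the inequality $\Pr[\medcap_{j\in A}\overline{E_j}]\ge\qdown_A$ will drop out of it with one line of algebra.

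Before the induction I would record two elementary facts about $\qdown$. First, a deletion recursion: for $k\in A$,
$$\qdown_A ~=~ \qdown_{A\setminus\set{k}} - p_k\,\qdown_{A\setminus\InclNbr(k)},$$
obtained by splitting the independent subsets $I\subseteq A$ according to whether $k\in I$ — those that contain $k$ consist of $\set{k}$ together with an independent subset of $A\setminus\InclNbr(k)$. Second, monotonicity: every term $p_k\,\qdown_{A\setminus\InclNbr(k)}$ subtracted above is nonnegative — this is precisely where the hypothesis ``$\qdown_S\ge0$ for all $S$'' enters — so $\qdown_B\ge\qdown_A$ whenever $B\subseteq A$; in particular $\qdown_A>0$ forces $\qdown_B>0$ for all $B\subseteq A$, which (via the induction) also guarantees that the intersections we condition on have positive probability. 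I will also use the trivial consequence of \eqref{eq:Dep} that $\Pr[E_k\mid\medcap_{j\in A}\overline{E_j}]\le p_k$ whenever $A$ contains no neighbor of $k$.

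The base case $A=\emptyset$ is immediate ($\qdown_\emptyset=1$, and $\Pr[E_k]\le p_k$ by \eqref{eq:Dep}). For the inductive step I would first deduce $\Pr[\medcap_{j\in A}\overline{E_j}]\ge\qdown_A$: if $\qdown_A=0$ it is trivial, and otherwise I pick any $k\in A$, set $A'=A\setminus\set{k}$ (so $A\setminus\InclNbr(k)=A'\setminus\Gamma(k)$), write
$$\Pr\big[\medcap_{j\in A}\overline{E_j}\big] ~=~ \Pr\big[\medcap_{j\in A'}\overline{E_j}\big]\cdot\Big(1-\Pr\big[E_k\,\big|\,\medcap_{j\in A'}\overline{E_j}\big]\Big),$$
bound the conditional probability by $p_k\,\qdown_{A'\setminus\Gamma(k)}/\qdown_{A'}$ using the inductive conditional estimate for $A'$, and simplify $1-p_k\,\qdown_{A'\setminus\Gamma(k)}/\qdown_{A'}=\qdown_A/\qdown_{A'}$ by the deletion recursion; together with $\Pr[\medcap_{j\in A'}\overline{E_j}]\ge\qdown_{A'}$ from the induction this gives exactly $\qdown_A$. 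Next I would prove the conditional estimate for $A$ (assuming $\qdown_A>0$). If $k$ has no neighbor in $A$, this is just \eqref{eq:Dep}. Otherwise I pick a neighbor $\ell\in\Gamma(k)\cap A$, set $A'=A\setminus\set{\ell}$, and use
$$\Pr\big[E_k\,\big|\,\medcap_{j\in A}\overline{E_j}\big] ~=~ \frac{\Pr\big[E_k\cap\overline{E_\ell}\,\big|\,\medcap_{j\in A'}\overline{E_j}\big]}{\Pr\big[\overline{E_\ell}\,\big|\,\medcap_{j\in A'}\overline{E_j}\big]} ~\leq~ \frac{\Pr\big[E_k\,\big|\,\medcap_{j\in A'}\overline{E_j}\big]}{1-\Pr\big[E_\ell\,\big|\,\medcap_{j\in A'}\overline{E_j}\big]},$$
apply the inductive conditional estimate to both $E_k$ and $E_\ell$ over $A'$, simplify the denominator to $\qdown_A/\qdown_{A'}$ exactly as before, and finish by noting that since $\ell\in\Gamma(k)$ we have $A'\setminus\Gamma(k)=A\setminus\Gamma(k)$, so the bound is precisely $p_k\,\qdown_{A\setminus\Gamma(k)}/\qdown_A$.

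The crux, and the step I expect to be the main obstacle, is settling on the right inductive invariant. The plain inequality $\Pr[\medcap_{j\in A}\overline{E_j}]\ge\qdown_A$ does not carry itself through the induction: peeling off a vertex $k$ replaces $\qdown_A$ by $\qdown_{A\setminus\set{k}}-p_k\,\qdown_{A\setminus\InclNbr(k)}$, and to match the subtracted term one would need an \emph{upper} bound on $\Pr[\medcap_{j\in A\setminus\InclNbr(k)}\overline{E_j}]$, which a set carrying fewer constraints has no reason to obey. Dragging the conditional-probability estimate along the induction is exactly what makes the recursion close, and the one genuinely delicate choice inside that argument is to recurse on a \emph{neighbor} of $k$ (the second case above) rather than on an arbitrary element of $A$ — precisely so that deleting it leaves $A\setminus\Gamma(k)$ unchanged. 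Everything else is routine bookkeeping with the deletion recursion and the monotonicity and nonnegativity of $\qdown$.
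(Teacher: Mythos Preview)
Your proof is correct and follows essentially the same route as the paper's. Your conditional-probability invariant $\Pr[E_k \mid \medcap_{j\in A}\overline{E_j}] \le p_k\,\qdown_{A\setminus\Gamma(k)}/\qdown_A$ is, via the deletion recursion, exactly the paper's ratio invariant $\Pdown_A/\Pdown_{A-a} \ge \qdown_A/\qdown_{A-a}$ (take $A' = A - a$, $k = a$); the paper telescopes over all of $A\cap\Gamma(a)$ at once where you peel off one neighbor at a time, but the underlying recursion and the key insight of recursing on a neighbor so that $A\setminus\Gamma(k)$ stays fixed are identical.
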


We present an inductive proof of this lemma. First, we state the following recursive identity for $\qdown_A$.

\begin{claim}[The ``fundamental identity'' for $\qdown$.] % {\protect Shearer \cite{Shearer}, Scott-Sokal \cite[Eq.~(3.5)]{ScottSokal}}]
\ClaimName{fundamental-q}
For any $a \in A$, we have
$$\qdown_A ~=~ \qdown_{A \setminus \set{a}} \,-\, p_a \cdot \qdown_{A \setminus \InclNbr(a)}.$$
\end{claim}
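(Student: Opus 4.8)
The plan is to prove \Claim{fundamental-q} as a purely formal (sign-counting) identity, with no probability or nonnegativity hypothesis needed: fix $a \in A$ and partition the independent sets $I \subseteq A$ appearing in the definition of $\qdown_A$ according to whether or not $a \in I$. Since $a \in A$, every such $I$ falls into exactly one of the two cases, so it suffices to evaluate the two resulting partial sums.

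First I would collect the contribution of the independent sets $I \subseteq A$ with $a \notin I$. These are exactly the independent sets contained in $A \setminus \set{a}$, so by definition their total signed contribution is $\qdown_{A \setminus \set{a}}$.

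Next I would handle the independent sets $I \subseteq A$ with $a \in I$. Writing $I = \set{a} \cup I'$ with $I' = I \setminus \set{a}$, independence of $I$ forces $I'$ to avoid every neighbor of $a$ as well as $a$ itself, i.e.\ $I' \subseteq A \setminus \InclNbr(a)$, and $I'$ is still an independent set; conversely, for any independent set $I' \subseteq A \setminus \InclNbr(a)$, the set $\set{a} \cup I'$ is an independent subset of $A$ containing $a$. So $I \mapsto I \setminus \set{a}$ is a bijection between the two families. Under it, $(-1)^{|I|}\prod_{i \in I} p_i = -\,p_a \cdot (-1)^{|I'|}\prod_{i \in I'} p_i$, so the partial sum over these $I$ equals $-\,p_a \cdot \qdown_{A \setminus \InclNbr(a)}$. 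Adding the two partial sums gives $\qdown_A = \qdown_{A \setminus \set{a}} - p_a \cdot \qdown_{A \setminus \InclNbr(a)}$, as claimed.

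There is essentially no obstacle here; the only point that warrants care is verifying that $I \mapsto I \setminus \set{a}$ really is a bijection onto the independent subsets of $A \setminus \InclNbr(a)$ — this uses both directions, that $a$ has no neighbor inside $I$ (so the image lands in $A \setminus \InclNbr(a)$, not merely $A \setminus \set{a}$) and that re-adjoining $a$ to any independent subset of $A \setminus \InclNbr(a)$ preserves independence. Everything else is bookkeeping of the $(-1)^{|I|}$ signs and the weights $p_i$. This identity is then exactly what drives the induction on $|A|$ used to prove \Lemma{extShearer}.
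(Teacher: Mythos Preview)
Your proof is correct and follows essentially the same approach as the paper: partition the independent sets $I \subseteq A$ according to whether $a \in I$, and use the bijection $I \mapsto I \setminus \set{a}$ between independent sets containing $a$ and independent subsets of $A \setminus \InclNbr(a)$. Your write-up is simply more explicit about the bijection and the sign bookkeeping than the paper's two-line version.
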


\begin{proof}
Every independent set $I \subseteq A$ either contains $a$ or does not. In addition, if $a \in I$
then $I$ is independent iff $I \setminus \{a\}$ is an independent subset of $A \setminus \InclNbr(a)$. Thus the terms in $\qdown_A$ correspond one-to-one to terms on the right-hand side.
\end{proof}

Next. define $\Pdown_A = \Pr[\bigcap_{i \in A} \overline{E_i}]$.
The following claim analogous to \Claim{fundamental-q} is the key inequality in the original proof
of the LLL \cite{ErdosLovasz,Spencer75,AlonSpencer} although typical expositions do not
call attention to it.

\begin{claim}[The ``fundamental inequality" for $\Pdown$]
\ClaimName{fundamentalP}
Assume that \eqref{eq:Dep} holds.
Then for each $a \in A$,
$$\Pdown_A \geq \Pdown_{A - a} - p_a \Pdown_{A \setminus \InclNbr(a)}.$$
\end{claim}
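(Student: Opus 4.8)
The plan is to start from the definitions and manipulate conditional probabilities. Write $\Pdown_A = \Pr[\overline{E_a} \cap \bigcap_{i \in A - a} \overline{E_i}]$. The natural first move is to decompose this as
$$\Pdown_A = \Pr\Big[\bigcap_{i \in A - a} \overline{E_i}\Big] - \Pr\Big[E_a \cap \bigcap_{i \in A - a} \overline{E_i}\Big] = \Pdown_{A-a} - \Pr\Big[E_a \cap \bigcap_{i \in A - a} \overline{E_i}\Big].$$
So it suffices to prove the upper bound $\Pr[E_a \cap \bigcap_{i \in A-a}\overline{E_i}] \leq p_a \Pdown_{A \setminus \InclNbr(a)}$.

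To get there, I would split the index set $A - a$ into the neighbors $\Gamma(a) \cap A$ and the non-neighbors $A \setminus \InclNbr(a)$. The key observation is that
$$\Pr\Big[E_a \cap \bigcap_{i \in A - a}\overline{E_i}\Big] \leq \Pr\Big[E_a \cap \bigcap_{i \in A \setminus \InclNbr(a)}\overline{E_i}\Big],$$
since we are dropping the events indexed by $\Gamma(a) \cap A$ from the intersection, which can only enlarge the event. Now write the right-hand side as $\Pr[E_a \mid \bigcap_{i \in A \setminus \InclNbr(a)}\overline{E_i}] \cdot \Pr[\bigcap_{i \in A \setminus \InclNbr(a)}\overline{E_i}]$, where if the conditioning event has probability zero we interpret the product as zero. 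The set $J = A \setminus \InclNbr(a)$ is exactly a subset of $[n] \setminus \InclNbr(a)$, so hypothesis \eqref{eq:Dep} applies and gives $\Pr[E_a \mid \bigcap_{i \in J}\overline{E_i}] \leq p_a$. The second factor is precisely $\Pdown_{A \setminus \InclNbr(a)}$. Combining these yields $\Pr[E_a \cap \bigcap_{i \in A-a}\overline{E_i}] \leq p_a \Pdown_{A \setminus \InclNbr(a)}$, and substituting into the decomposition above finishes the proof.

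The only subtlety to handle carefully is the degenerate case where $\Pr[\bigcap_{i \in A \setminus \InclNbr(a)}\overline{E_i}] = 0$, so that the conditional probability in \eqref{eq:Dep} is not literally defined; there one argues directly that $\Pr[E_a \cap \bigcap_{i \in A-a}\overline{E_i}] = 0 = p_a \Pdown_{A\setminus\InclNbr(a)}$ since the event in question is contained in an event of probability zero. I do not expect any real obstacle here — the argument is a short chain of monotonicity of probability, the definition of conditional probability, and one application of the hypothesis. The proof mirrors the structure of \Claim{fundamental-q}: dropping the neighbors of $a$ from the intersection on the probability side plays the role of the combinatorial fact that an independent set containing $a$ is determined by an independent subset of $A \setminus \InclNbr(a)$ on the polynomial side.
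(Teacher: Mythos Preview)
Your proposal is correct and follows essentially the same approach as the paper: decompose $\Pdown_A = \Pdown_{A-a} - \Pr[E_a \cap \bigcap_{i\in A-a}\overline{E_i}]$, drop the neighbor events by monotonicity, and then apply \eqref{eq:Dep} with $J = A \setminus \InclNbr(a)$ to bound the remaining term by $p_a \Pdown_{A\setminus\InclNbr(a)}$. Your explicit handling of the degenerate case $\Pdown_{A\setminus\InclNbr(a)} = 0$ is a nice addition that the paper leaves implicit.
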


\begin{proof}
The claim is derived as follows.
$$
\Pdown_A 
 ~=~ \Pdown_{A-a} - \Pr\Bigg[ E_a \cap \bigcap_{i \in A-a} \overline{E_i} \Bigg]
 ~\geq~ \Pdown_{A-a} - \Pr\Bigg[ E_a \cap \bigcap_{i \in A \setminus \InclNbr(a)} \overline{E_i} \Bigg]
 ~\geq~ \Pdown_{A-a} - p_a \Pdown_{A \setminus \InclNbr(a)}
$$
The first inequality is trivial (by monotonicity of measure with respect to taking subsets) and the
second inequality is our assumption with $J = A \setminus \InclNbr(a)$.
\end{proof}

Given these two claims, Shearer's Lemma follows by induction.  \vspace{3pt}

\begin{proofof}{\Lemma{extShearer}}
We claim by induction on $|A|$ that for all $a \in A$, 
\begin{equation}
\EquationName{ShearerInduction}
\frac{\breve{P}_{A}}{\breve{P}_{A-a}} \geq \frac{\breve{q}_{A}}{\breve{q}_{A-a}}.
\end{equation}
The base case, $A = \{a\}$, holds because $\breve{P}_{\{a\}} = \breve{q}_{\{a\}} = 1 - p_a$
and $\breve{P}_\emptyset = \breve{q}_\emptyset = 1$.

For $|A|>1$, the inductive hypothesis applied successively to the elements of $A \cap \Gamma(a)$ yields
$$ \frac{\breve{P}_{A-a}}{\breve{P}_{A \setminus \InclNbr(a)}} \geq
\frac{\breve{q}_{A-a}}{\breve{q}_{A \setminus \InclNbr(a)}}.$$
Using this inequality, \Claim{fundamental-q} and \Claim{fundamentalP}, we obtain
$$ \frac{\breve{P}_{A}}{\breve{P}_{A-a}} \geq 1 - p_a \frac{\breve{P}_{A \setminus
\InclNbr(a)}}{\breve{P}_{A-a}} \geq 1 - p_a \frac{\breve{q}_{A \setminus \InclNbr(a)}}{\breve{q}_{A-a}} 
= \frac{\breve{q}_A}{\breve{q}_{A-a}}.$$
This proves the inductive claim.

Combining \eqref{eq:ShearerInduction} with the fact $\breve{P}_\emptyset = \breve{q}_\emptyset = 1$
shows that $\breve{P}_A \geq \breve{q}_A$ for all $A$.
\end{proofof}

\paragraph{Comparison to Shearer's original lemma.}
Shearer's lemma was originally stated as follows \cite{Shearer}.

\begin{lemma}
\LemmaName{origShearer}
Suppose that
\begin{equation}
\EquationName{Dep2}
\Pr[E_i \mid \medcap_{j \in J} \overline{E_j}] ~=~ \Pr[E_i]
\qquad\forall i \in [n] ,\, J \subseteq [n] \setminus \InclNbr(i).
\end{equation}
Let $p_i = \Pr[E_i]$ and for each $S \subseteq [n]$, define
$$q_S = \sumstack{I \in \Ind(G) \\ S \subseteq I} (-1)^{|I \setminus S|} \prod_{i \in I} p_i.$$
If $q_S \geq 0$ for all $S \subseteq [n]$, then
$$ \Pr[\medcap_{i=1}^{n} \overline{E_i}] \geq q_\emptyset.$$
\end{lemma}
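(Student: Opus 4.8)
The plan is to deduce \Lemma{origShearer} from the already-established \Lemma{extShearer}. Put $p_i = \Pr[E_i]$. Then \eqref{eq:Dep2} is a special case of \eqref{eq:Dep} (it even holds with equality), so the dependence hypothesis of \Lemma{extShearer} is automatically in force. The work lies in reconciling the two positivity hypotheses ($q_S \geq 0$ versus $\qdown_S \geq 0$) and the two conclusions, and the bridge between them is a single combinatorial identity.

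The key step I would carry out is the identity
$$\qdown_T ~=~ \sum_{U \subseteq [n] \setminus T} q_U \qquad \text{for every } T \subseteq [n].$$
This is pure inclusion--exclusion on the Boolean lattice: writing $f(I) = (-1)^{|I|}\prod_{i \in I} p_i$ for $I \in \Ind(G)$, one has $q_U = (-1)^{|U|}\sum_{I \in \Ind(G),\, I \supseteq U} f(I)$, and therefore
$$\sum_{U \subseteq [n] \setminus T} q_U ~=~ \sum_{I \in \Ind(G)} f(I) \sum_{U \subseteq I \cap ([n]\setminus T)} (-1)^{|U|} ~=~ \sum_{\substack{I \in \Ind(G)\\ I \subseteq T}} f(I) ~=~ \qdown_T ,$$
because $\sum_{U \subseteq X}(-1)^{|U|}$ vanishes unless $X = \emptyset$. (One can instead prove this by deleting the vertices of $[n]\setminus T$ one at a time, using the easily checked recursion $q^{(G-v)}_S = q^{(G)}_S + q^{(G)}_{S\cup\{v\}}$ for $v \notin S$, where $q^{(H)}_S$ denotes the analogous alternating sum over $\Ind(H)$.) In particular, the case $T = [n]$ gives $\qdown_{[n]} = q_\emptyset$.

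With this identity in hand the proof closes immediately. If $q_S \geq 0$ for all $S \subseteq [n]$, then $\qdown_T = \sum_{U \subseteq [n]\setminus T} q_U \geq 0$ for every $T \subseteq [n]$, so the positivity hypothesis of \Lemma{extShearer} is satisfied. Applying \Lemma{extShearer} with $A = [n]$ yields
$$\Pr[\medcap_{i=1}^{n} \overline{E_i}] ~=~ \Pdown_{[n]} ~\geq~ \qdown_{[n]} ~=~ q_\emptyset ,$$
which is exactly the conclusion of \Lemma{origShearer}. I do not anticipate a real obstacle here; the only point that needs care is the inclusion--exclusion bookkeeping in the identity above. (It is reassuring, although not needed, to observe that $q_S = 0$ whenever $S \notin \Ind(G)$, so that Shearer's hypothesis is genuinely a condition on independent sets only.)
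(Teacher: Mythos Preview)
Your proposal is correct and follows essentially the same route the paper takes: deduce \Lemma{origShearer} from \Lemma{extShearer} by observing that \eqref{eq:Dep2} implies \eqref{eq:Dep}, proving the inclusion--exclusion identity $\qdown_T = \sum_{U \subseteq [n]\setminus T} q_U$ (so that $q_S \geq 0\ \forall S$ implies $\qdown_T \geq 0\ \forall T$), and then noting $\qdown_{[n]} = q_\emptyset$. The paper sketches this derivation in the paragraph following \Lemma{origShearer} without writing out the inclusion--exclusion computation, which you have supplied in full.
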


There are two differences between \Lemma{extShearer} and \Lemma{origShearer}.
Regarding condition (i), \Lemma{extShearer} uses \eqref{eq:Dep}
whereas \Lemma{origShearer} uses \eqref{eq:Dep2};
as discussed above, the former condition is more general.
The other main difference is the use of coefficients $q_S$ in \Lemma{origShearer} as opposed to $\qdown_S$ in \Lemma{extShearer}. 
The condition $q_S \geq 0 \:\forall S$ turns out to be equivalent to $\qdown_S \geq 0 \:\forall S$,
so \Lemma{extShearer} and \Lemma{origShearer} have equivalent formulations of condition (ii) (see \cite{HV-arxiv} for more details).
We chose to state \Lemma{extShearer} using $\qdown_S$ because those are the coefficients that
naturally arise in the proof.

Shearer gives an interpretation of these coefficients as follows:
there is a unique probability space called the ``tight instance" that minimizes the
probability of $\Pr[ \medcap_i \overline{E_i} ]$.
In that probability space, $q_S$ is exactly the probability that the events $\setst{ E_i }{ i \in S }$
occur and the events $\setst{ E_j }{ j \notin S }$ do not occur.
In contrast, the coefficient $\qdown_S$ is the probability that the events $\setst{ E_i }{ i \in S }$
do not occur. 
In general, the coefficients are related by the identity $\qdown_S = \sum_{T \subseteq [n] \setminus
S} q_T$, which can be proved by inclusion-exclusion.
The conclusion of \Lemma{extShearer} is that $\Pr[\bigcap_{i=1}^{n} \overline{E_i}] \geq \qdown_{[n]}$ and it is easy to see that $\qdown_{[n]} = q_\emptyset$. 
Hence we recover \Lemma{origShearer} from \Lemma{extShearer}.
The tight instance also shows that the conclusion of Shearer's lemma is tight.

%%%%%%%%%%%%%%%%%%%%%%%%%%%%%%%%%%%%%%%%%%%%%%%%%%%%%%%%%%%%%%%%%%%%%%%%%%%%%%%%
\section{Cluster Expansion}
\label{sec:cluster}

Next we turn to a variant of the LLL that is stronger than the early formulations
\cite{ErdosLovasz,Spencer75,Spencer77} but weaker than Shearer's Lemma.
This lemma has been referred to as the {\em cluster expansion} variant of the LLL; it was proved by Bissacot et al.~\cite{Bissacot} using analytic techniques inspired by statistical physics. Although it is subsumed by Shearer's Lemma, it is typically easier to use in applications and
provides stronger quantitative results than the original LLL.

For variables $y_1,\ldots,y_n$, we define
$$
Y_S ~=~ \sumstack{I \in \Ind \\ I \subseteq S} y^I,
$$
where $y^I$ denotes $\prod_{i \in I} y_i$.
This is similar to the quantity $\qdown_S$, but without the alternating sign.

\begin{lemma}[the cluster expansion lemma]
\LemmaName{cluster}
Suppose that
\eqref{eq:Dep} holds 
and there exist $y_1,\ldots,y_n>0$ such that for each $i \in [n]$,
\begin{equation}
\EquationName{CLL}
p_i ~\leq~ \frac{y_i}{Y_{\InclNbr(i)}}.
\end{equation}
Then
$$ \Pr[\medcap_{i=1}^{n} \overline{E_i}] ~\geq~ \frac{1}{Y_{[n]}} ~>~ 0.$$
\end{lemma}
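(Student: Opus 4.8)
The plan is to adapt the inductive scheme of \Section{ShearerShort}. The analogue of the ``fundamental identity'' \Claim{fundamental-q} for the $Y$-quantities is $Y_A = Y_{A \setminus \{a\}} + y_a\, Y_{A \setminus \InclNbr(a)}$ for any $a \in A$, proved by the very same case split (an independent subset of $A$ either avoids $a$, or is $\{a\}$ together with an independent subset of $A \setminus \InclNbr(a)$). Combined with the ``fundamental inequality'' \Claim{fundamentalP} for $\Pdown$, which already holds under \eqref{eq:Dep}, this is almost the only structure I will use; the one extra ingredient is the elementary fact that $Y$ is \emph{submultiplicative over disjoint unions}, $Y_{X \cup Z} \leq Y_X\, Y_Z$ when $X \cap Z = \emptyset$ (the assignment $I \mapsto (I \cap X, I \cap Z)$ injects independent subsets of $X \cup Z$ into pairs of independent subsets, and each $y_i > 0$), together with its trivial consequence that $Y$ is monotone under inclusion. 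Note one cannot simply copy \eqref{eq:ShearerInduction} with $\qdown$ replaced by $1/Y$: since \eqref{eq:CLL} is only an inequality and invokes the \emph{whole} neighborhood, that route would require $Y_{\InclNbr(a)}\, Y_{A \setminus \InclNbr(a)}^2 \geq Y_A\, Y_{A \setminus \{a\}}$, which already fails for $A = \InclNbr(a)$.

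Instead I would fix an arbitrary order on $[n]$ and prove, by induction on $|S|$, the following \textbf{Key Lemma}: for every $S \subseteq [n]$ one has $\Pr[\medcap_{i \in S}\overline{E_i}] > 0$, and for every $a \notin S$,
$$\Pr\big[\, E_a \mid \medcap_{i \in S}\overline{E_i} \,\big] \ \leq\ \frac{y_a}{Y_{\InclNbr(a) \setminus S}}.$$
(Positivity follows from the bound applied to a single element $c$ removed from $S$, since then $\Pr[\medcap_{i \in S}\overline{E_i}] = \Pr[\medcap_{i \in S \setminus \{c\}}\overline{E_i}]\cdot(1 - \Pr[E_c \mid \medcap_{i \in S \setminus \{c\}}\overline{E_i}])$ and $Y_{\InclNbr(c) \setminus (S \setminus \{c\})} > y_c$.) For the bound: if $\Gamma(c)\cap S$ where I mean $\Gamma(a) \cap S = \emptyset$ then $S \subseteq [n] \setminus \InclNbr(a)$, so \eqref{eq:Dep} gives $\Pr[E_a \mid \cdots] \leq p_a \leq y_a / Y_{\InclNbr(a)} = y_a / Y_{\InclNbr(a) \setminus S}$ and we are done (this covers $S = \emptyset$). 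Otherwise pick a neighbor $c \in \Gamma(a) \cap S$; Bayes' rule (conditional form) applied to the event $\overline{E_c}$, together with $\Pr[\,\overline{E_c} \mid E_a \cap \medcap_{i \in S \setminus \{c\}}\overline{E_i}\,] \leq 1$, yields
$$\Pr\big[\, E_a \mid \medcap_{i \in S}\overline{E_i} \,\big] \ \leq\ \frac{\Pr[\, E_a \mid \medcap_{i \in S \setminus \{c\}}\overline{E_i} \,]}{1 - \Pr[\, E_c \mid \medcap_{i \in S \setminus \{c\}}\overline{E_i} \,]}.$$
Bounding the numerator and denominator by the inductive hypothesis and using the fundamental identity for $Y$ to expand $Y_{(\InclNbr(a) \setminus S) \cup \{c\}}$, the target inequality collapses to $Y_{\InclNbr(a) \setminus S} \leq Y_{(\InclNbr(a) \setminus S) \setminus \InclNbr(c)}\cdot Y_{\Gamma(c) \setminus S}$. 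This in turn follows by splitting $\InclNbr(a) \setminus S$ along $\Gamma(c)$ (submultiplicativity) and observing that $(\InclNbr(a) \setminus S) \cap \Gamma(c) \subseteq \Gamma(c) \setminus S$ (monotonicity).

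To finish, fix the ordering $a_1, \dots, a_n$, write $A_k = \{a_1, \dots, a_k\}$ and $B_k = \{a_k, \dots, a_n\}$, and let $F_k = \Gamma(a_k) \cap B_{k+1}$. Expanding $\Pdown_{[n]}$ as the telescoping product $\prod_{k} \Pr[\,\overline{E_{a_k}} \mid \medcap_{i \in A_{k-1}}\overline{E_i}\,]$ and bounding each factor by the Key Lemma gives $\Pdown_{[n]} \geq \prod_k \frac{Y_{F_k}}{y_{a_k} + Y_{F_k}}$. On the other hand the fundamental identity for $Y$ gives $Y_{[n]} = \prod_k \frac{Y_{B_k}}{Y_{B_{k+1}}}$ with $\frac{Y_{B_k}}{Y_{B_{k+1}}} = 1 + y_{a_k}\frac{Y_{B_{k+1} \setminus \Gamma(a_k)}}{Y_{B_{k+1}}} \geq 1 + \frac{y_{a_k}}{Y_{F_k}}$, the last step being submultiplicativity applied to $B_{k+1} = F_k \cup (B_{k+1} \setminus \Gamma(a_k))$. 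Comparing the two products factor by factor yields $\Pdown_{[n]} \geq \prod_k \frac{Y_{B_{k+1}}}{Y_{B_k}} = 1/Y_{[n]} > 0$, which is \Lemma{cluster}.

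\textbf{The main obstacle} is pinning down the correct inductive invariant. The bound in the Key Lemma must refer to $\InclNbr(a) \setminus S$ — the neighbors of $a$ that are \emph{not} conditioned upon — rather than to $\InclNbr(a) \cap S$; the latter, seemingly more natural, version is false, as a two-vertex example with disjoint events already shows: under \eqref{eq:CLL} the value $\Pr[E_a \mid \overline{E_b}] = y_a/(1+y_a)$ is attainable, and it exceeds $y_a/Y_{\{a,b\}}$. With the right invariant in hand, every remaining inequality is an instance of submultiplicativity of $Y$ over disjoint unions, so what is left is routine bookkeeping.
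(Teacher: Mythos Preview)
Your proof is correct, and it takes a genuinely different route from the paper. Both arguments rest on the same two facts about $Y$ --- the fundamental identity and submultiplicativity over disjoint unions --- but they differ in the inductive invariant. The paper proves $\frac{\Pdown_S}{\Pdown_{S-a}} \geq \frac{Y_{S^c}}{Y_{(S-a)^c}}$, a \emph{global} comparison indexed by complements (this is exactly what your opening paragraph rules out if one na\"ively replaces $\qdown_A$ by $1/Y_A$; the trick is to replace it by $Y_{A^c}$ instead). That invariant telescopes directly to $1/Y_{[n]}$ with no further work. Your Key Lemma, $\Pr[E_a \mid \medcap_{i\in S}\overline{E_i}] \leq y_a / Y_{\InclNbr(a)\setminus S}$, is a \emph{local} bound depending only on the neighborhood of $a$; it is in fact at least as tight as the paper's bound on the same conditional probability (the gap is precisely the slack in submultiplicativity across the cut $\Gamma(a)\cap S^c \mid S^c\setminus\Gamma(a)$), and it mirrors the classical asymmetric-LLL induction more closely. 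The price is that your telescoped product $\prod_k Y_{F_k}/(y_{a_k}+Y_{F_k})$ is not $1/Y_{[n]}$ on the nose, so you need a second application of submultiplicativity at the end to close the argument. In short: the paper's invariant is cleaner to assemble, while yours gives a sharper intermediate bound and stays nearer to the original LLL template.
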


Here we present an inductive combinatorial proof of \Lemma{cluster}. 
First, some preliminary facts.

\begin{claim}[The ``Fundamental Identity'' for $Y$]
\ClaimName{fundamentalY}
For any $a \in A$, we have
$$ Y_{A} = Y_{A - a} + y_a Y_{A \setminus \InclNbr(a)}.$$
\end{claim}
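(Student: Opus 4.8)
The plan is to reuse, almost verbatim, the bijective argument from \Claim{fundamental-q}: the quantity $Y_S$ differs from $\qdown_S$ only in that it drops the alternating factor $(-1)^{|I|}$, and that factor played no role in the combinatorial grouping of terms. So first I would split the sum defining $Y_A$ according to whether a given independent set $I \subseteq A$ contains the element $a$ or not.

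For the independent sets $I \subseteq A$ with $a \notin I$, these are precisely the independent subsets of $A - a$, and each contributes the same monomial $y^I$ to both $Y_A$ and $Y_{A-a}$; hence their total contribution is exactly $Y_{A-a}$. For the independent sets $I \subseteq A$ with $a \in I$, the crucial observation is that $I$ is independent if and only if $I$ contains no neighbor of $a$ and $I \setminus \set{a}$ is independent --- equivalently, $I \setminus \set{a}$ ranges over all independent subsets of $A \setminus \InclNbr(a)$. Since $y^I = y_a \cdot y^{I \setminus \set{a}}$, summing over these $I$ yields $y_a \, Y_{A \setminus \InclNbr(a)}$. Adding the two contributions gives $Y_A = Y_{A-a} + y_a Y_{A \setminus \InclNbr(a)}$, as claimed.

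I do not expect any genuine obstacle here: the identity is immediate from the definition of $Y_S$ as a generating function over independent sets, and (unlike the later steps of the proof of \Lemma{cluster}) it uses neither the positivity of the $y_i$ nor hypothesis \eqref{eq:Dep}. The only point worth a sentence is the boundary case in which $A \setminus \InclNbr(a)$ is empty, where the convention $Y_\emptyset = 1$ --- the empty independent set contributing the empty product --- correctly accounts for the single set $I = \set{a}$.
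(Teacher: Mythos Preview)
Your proposal is correct and matches the paper's own proof essentially verbatim: the paper first remarks that $Y_S = \qdown_S(-y)$ so \Claim{fundamental-q} applies directly, and then gives precisely your bijective splitting of the independent sets $I \subseteq A$ according to whether $a \in I$. Your added remark about the boundary case $A \setminus \InclNbr(a) = \emptyset$ is a nice touch but not something the paper bothers to mention.
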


\begin{proof}
This follows from \Claim{fundamental-q} since we can write $Y_S = \qdown_S(-y)$.
Or directly, every summand $y^J$ on the left-hand side either appears in $Y_{A - a}$
if $a \not\in J$, or as a summand in $y_a Y_{A \setminus \InclNbr(a)}$ if $a \in J$.
\end{proof}

\begin{claim}[Log-subadditivity of $Y$]
\ClaimName{submult}
If $A, B$ are disjoint then $Y_{A \union B} \leq Y_A \cdot Y_B$.
\end{claim}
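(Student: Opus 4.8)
The plan is to prove \Claim{submult} by expanding both sides into sums of monomials indexed by independent sets and checking that every monomial appearing on the left also appears on the right.

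First I would expand the right-hand side. By definition,
$$ Y_A \cdot Y_B ~=~ \sumstack{I \in \Ind \\ I \subseteq A} \; \sumstack{J \in \Ind \\ J \subseteq B} y^I y^J , $$
and since $A$ and $B$ are disjoint, each such pair $I, J$ is disjoint, so $y^I y^J = y^{I \union J}$. Thus $Y_A \cdot Y_B$ is a sum of monomials $y^{I \union J}$, one for each ordered pair $(I,J)$ of independent sets with $I \subseteq A$ and $J \subseteq B$.

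Next I would expand the left-hand side as $Y_{A \union B} = \sum_K y^K$ over independent $K \subseteq A \union B$, and for each such $K$ take $I := K \intersect A$ and $J := K \intersect B$. Both $I$ and $J$ are independent (being subsets of the independent set $K$), and $K = I \union J$, so $y^K$ is precisely the monomial that the pair $(I,J)$ contributes to $Y_A \cdot Y_B$. The assignment $K \mapsto (K \intersect A, K \intersect B)$ is injective, so the terms of $Y_{A \union B}$ are, via this map, a subcollection of the terms of $Y_A \cdot Y_B$. Since every $y_i$ is positive, all monomials are nonnegative, and dropping the remaining terms of $Y_A \cdot Y_B$ only decreases it; therefore $Y_{A \union B} \leq Y_A \cdot Y_B$.

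There is no real obstacle here; the only points to verify are that disjointness of $A$ and $B$ is exactly what lets one rewrite $y^I y^J$ as $y^{I \union J}$, and that the extra terms of $Y_A \cdot Y_B$ — coming from pairs $(I,J)$ for which $I \union J$ is not independent, which happens precisely when $G$ has an edge between $A$ and $B$ — can be discarded harmlessly because the $y_i$ are positive.
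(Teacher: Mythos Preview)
Your argument is correct and is essentially identical to the paper's own proof: both expand $Y_A \cdot Y_B$ as a double sum over pairs of independent sets, observe that each term $y^K$ of $Y_{A\cup B}$ appears there via $(K\cap A, K\cap B)$, and discard the remaining nonnegative terms. You are just slightly more explicit about the injectivity and the role of disjointness.
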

\begin{proof}
Every summand $y^J$ of $Y_{A \cup B}$ appears in the expansion of the product
$$
Y_A \cdot Y_B = 
 \sumstack{J' \subseteq A \\ J' \in \Ind} 
 \sumstack{J'' \subseteq B \\ J'' \in \Ind} y^{J'} y^{J''}
$$
by taking $J' = J \intersect A$ and $J'' = J \intersect B$.
All other terms on the right-hand side are non-negative.
\end{proof}

The following is the key inductive inequality, analogous to \Equation{ShearerInduction}
in the proof of Shearer's Lemma. Note that here the induction runs
in the opposite direction for the $Y$ coefficients, which are indexed by complementary sets;
the reason for this lies in the fundamental identity for $Y$ (\Claim{fundamentalY}) which
has the opposite sign compared to \Claim{fundamental-q}.
For a set $S \subseteq [n]$,  we will use the notation $S^c = [n] \setminus S$.

\begin{lemma}
\LemmaName{cluster-induction}
Suppose that $p$ satisfies \eqref{eq:CLL}.
Then for every $a \in S \subseteq [n]$, $\Pdown_S > 0$ and
$$
\frac{\Pdown_{S}}{\Pdown_{S-a}} ~\geq~ \frac{Y_{S^c}}{Y_{(S-a)^c}}.
$$
\end{lemma}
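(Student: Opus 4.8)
The plan is to mirror the proof of the inductive inequality \eqref{eq:ShearerInduction} from Shearer's Lemma, but working with the $Y$ coefficients instead of the $\qdown$ coefficients and being careful that the induction now runs over complementary sets. I would induct on $|S|$. In the base case $S = \{a\}$, we have $\Pdown_{\{a\}} = 1 - p_a > 0$ (positivity follows from \eqref{eq:CLL}, since $p_a \leq y_a / Y_{\InclNbr(a)} < 1$ because $Y_{\InclNbr(a)} \geq 1 + y_a$), and we must check $\Pdown_{\{a\}} / \Pdown_\emptyset = 1 - p_a \geq Y_{\{a\}^c} / Y_{[n]}$. By \Claim{submult} applied to $\{a\}$ and $\{a\}^c$, $Y_{[n]} \leq Y_{\{a\}} \cdot Y_{\{a\}^c} = (1 + y_a) Y_{\{a\}^c}$; combined with $Y_{\InclNbr(a)} \geq 1 + y_a$ and $p_a \leq y_a/Y_{\InclNbr(a)}$, this should give $1 - p_a \geq 1/(1+y_a) \geq Y_{\{a\}^c}/Y_{[n]}$. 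Actually the cleaner route is: it suffices to show $1 - p_a \geq Y_{S^c}/Y_{(S-a)^c}$ in general, and I expect the key sub-inequality to be $1 - p_a \geq Y_{(S-a)^c \setminus S^c \text{ business}} \dots$ — more precisely, that $Y_{(S-a)^c} \geq Y_{S^c} + p_a \, Y_{S^c \dots}$, which I develop below.

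For the inductive step with $|S| > 1$, the strategy parallels Shearer exactly. First, apply the inductive hypothesis successively to the vertices of $S \cap \Gamma(a)$ — each such application relates $\Pdown$ over a set to $\Pdown$ over that set minus one neighbor, and telescoping these yields
$$
\frac{\Pdown_{S - a}}{\Pdown_{S \setminus \InclNbr(a)}} ~\geq~ \frac{Y_{(S-a)^c}}{Y_{(S \setminus \InclNbr(a))^c}}.
$$
(Note that removing $a$ from $S$ enlarges the complement, and removing neighbors of $a$ further enlarges it, so the chain of inductive applications is consistent.) This also certifies $\Pdown_S > 0$ along the way, since the \Claim{fundamentalP} bound $\Pdown_S \geq \Pdown_{S-a} - p_a \Pdown_{S \setminus \InclNbr(a)}$ together with the above ratio bound forces $\Pdown_S / \Pdown_{S-a} \geq 1 - p_a \cdot Y_{(S-a)^c}/Y_{(S\setminus\InclNbr(a))^c}$, and we will show the right side is nonnegative, hence positive.

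The crux is then the purely combinatorial inequality
$$
1 - p_a \cdot \frac{Y_{(S-a)^c}}{Y_{(S \setminus \InclNbr(a))^c}} ~\geq~ \frac{Y_{S^c}}{Y_{(S-a)^c}},
$$
which is where \eqref{eq:CLL}, the fundamental identity \Claim{fundamentalY}, and log-subadditivity \Claim{submult} all come in. Observe $(S \setminus \InclNbr(a))^c = (S-a)^c \cup (\InclNbr(a) \cap (S-a)^c)$ — wait, more carefully: $(S \setminus \InclNbr(a))^c = S^c \cup \InclNbr(a)$. Using \Claim{fundamentalY} with the element $a$ on the set $(S \setminus \InclNbr(a))^c = S^c \cup \InclNbr(a)$ (which contains $a$), and noting $\big((S^c \cup \InclNbr(a)) - a\big) = S^c \cup \Gamma(a) = (S-a)^c$ and $\big((S^c \cup \InclNbr(a)) \setminus \InclNbr(a)\big) = S^c$, we get $Y_{(S\setminus\InclNbr(a))^c} = Y_{(S-a)^c} + y_a Y_{S^c}$. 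Substituting, the desired inequality becomes, after clearing the positive denominator $Y_{(S-a)^c} \cdot Y_{(S\setminus\InclNbr(a))^c}$,
$$
Y_{(S-a)^c}^2 - p_a\, Y_{(S-a)^c}^2 \cdot \frac{Y_{(S-a)^c}}{Y_{(S\setminus\InclNbr(a))^c}} \cdot \frac{1}{Y_{(S-a)^c}}\dots
$$
— I will not grind this out here, but the point is it reduces to showing $p_a \leq y_a / Y_{\InclNbr(a)}$ suffices, using $Y_{\InclNbr(a)} \geq Y_{(S\setminus\InclNbr(a))^c \text{-restricted piece}}$... The main obstacle I anticipate is getting the bookkeeping of complements exactly right and then verifying that \eqref{eq:CLL}'s bound $p_a \leq y_a/Y_{\InclNbr(a)}$ is precisely strong enough; I expect to need log-subadditivity (\Claim{submult}) to bound a $Y$ over some subset of $\InclNbr(a)$ appearing in the cleared inequality by $Y_{\InclNbr(a)}$ itself, turning the condition $p_a Y_{\InclNbr(a)} \leq y_a$ into exactly what is needed. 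Once that single scalar inequality is checked, substituting back gives $\Pdown_S/\Pdown_{S-a} \geq Y_{S^c}/Y_{(S-a)^c}$, completing the induction; positivity of $\Pdown_S$ is then immediate since all the $Y$'s are positive and the ratio is positive.
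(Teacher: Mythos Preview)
Your overall strategy --- induct on $|S|$, use \Claim{fundamentalP}, telescope the inductive hypothesis over $S\cap\Gamma(a)$, then establish a purely combinatorial $Y$-inequality via \Claim{fundamentalY} and \Claim{submult} --- is exactly the paper's. But the execution contains errors that will not ``grind out.'' In the base case, log-subadditivity points the wrong way: $Y_{[n]} \leq (1+y_a)\,Y_{\{a\}^c}$ gives $Y_{\{a\}^c}/Y_{[n]} \geq 1/(1+y_a)$, not $\leq$, so your chain $1-p_a \geq 1/(1+y_a) \geq Y_{\{a\}^c}/Y_{[n]}$ breaks at the second step. In the inductive step the ratio in your ``crux'' is transposed: the telescoped hypothesis yields $\Pdown_{S\setminus\InclNbr(a)}/\Pdown_{S-a} \leq Y_{(S\setminus\InclNbr(a))^c}/Y_{(S-a)^c}$, so the quantity to bound below is $1 - p_a\,Y_{(S\setminus\InclNbr(a))^c}/Y_{(S-a)^c}$, not $1 - p_a\,Y_{(S-a)^c}/Y_{(S\setminus\InclNbr(a))^c}$. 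Finally, your application of \Claim{fundamentalY} to $S^c\cup\InclNbr(a)$ rests on wrong set identities: $(S^c\cup\InclNbr(a))-a = S^c\cup\Gamma(a)$, which is \emph{not} $(S-a)^c = S^c+a$; and $(S^c\cup\InclNbr(a))\setminus\InclNbr(a) = S^c\setminus\InclNbr(a)$, not $S^c$.

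The fix (and this is what the paper does) is to apply \Claim{fundamentalY} to $(S-a)^c = S^c+a$ instead, obtaining $Y_{S^c+a} = Y_{S^c} + y_a\,Y_{S^c\setminus\InclNbr(a)}$. Now \eqref{eq:CLL} gives $y_a \geq p_a\,Y_{\InclNbr(a)}$, and \Claim{submult} applied to the disjoint pair $\InclNbr(a),\, S^c\setminus\InclNbr(a)$ gives $Y_{\InclNbr(a)}\cdot Y_{S^c\setminus\InclNbr(a)} \geq Y_{S^c\cup\InclNbr(a)}$; combining, $Y_{S^c+a} \geq Y_{S^c} + p_a\,Y_{S^c\cup\InclNbr(a)}$, which rearranges to the correctly oriented inequality $1 - p_a\,Y_{S^c\cup\InclNbr(a)}/Y_{S^c+a} \geq Y_{S^c}/Y_{S^c+a}$. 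The base case is just the instance $S^c=\emptyset$ of this same computation, so no separate argument is needed.
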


\begin{proof}
First, note that \eqref{eq:CLL} implies that $p_i < 1$ for all $i$.
We proceed by induction on $|S|$. The base case is $S = \{a\}$. In that case we have
$ \frac{\Pdown_{\{a\}}}{\Pdown_\emptyset} = \Pr[\overline{E_a}] \geq 1 - p_a > 0$.
On the other hand, by the two claims above and \eqref{eq:CLL}, we have 
$$ Y_{[n]} ~=~ Y_{[n]-a} + y_a Y_{[n] \setminus \InclNbr(a)}
 ~\geq~ Y_{[n] - a} + p_a Y_{\InclNbr(a)} Y_{[n] \setminus \InclNbr(a)}
 ~\geq~ Y_{[n] - a} + p_a Y_{[n]}.$$
Therefore, $\frac{Y_{[n]-a}}{Y_{[n]}} \leq 1 - p_a$ which proves the base case.

We prove the inductive step by similar manipulations.  Let $a \in S$.
We can assume that $\Pdown_{S-a} > 0$ by the inductive hypothesis.
By \Claim{fundamentalP}, we have
$$ \frac{\Pdown_S}{\Pdown_{S-a}} ~\ge~ 
1 - p_a \frac{\Pdown_{S \setminus \InclNbr(a)}}{\Pdown_{S-a}}.$$
The inductive hypothesis applied repeatedly to the elements of $S \cap \Gamma(a)$ yields
\begin{equation*}
1 - p_a \frac{\Pdown_{S \setminus \InclNbr(a)}}{\Pdown_{S-a}}
 ~\geq~ 1 - p_a \frac{Y_{(S \setminus \InclNbr(a))^c}}{Y_{(S-a)^c}}
 ~=~ 1 - p_a \frac{Y_{S^c \cup \InclNbr(a)}}{Y_{S^c + a}}. 
\end{equation*}
By the two claims above and \eqref{eq:CLL}, we have
$$ Y_{S^c+a} ~=~ Y_{S^c} + y_a Y_{S^c \setminus \InclNbr(a)}
 ~\geq~ Y_{S^c} + p_a Y_{\InclNbr(a)} Y_{S^c \setminus \InclNbr(a)}
 ~\geq~ Y_{S^c} + p_a Y_{S^c \cup \InclNbr(a)}.$$
We conclude that
$$ \frac{\Pdown_{S}}{\Pdown_{S-a}}
 ~\geq~ 1 - p_a \frac{Y_{S^c \cup \InclNbr(a)}}{Y_{S^c+a}}
 ~\geq~ 1 - \frac{Y_{S^c+a} - Y_{S^c}}{Y_{S^c+a}} = \frac{Y_{S^c}}{Y_{(S-a)^c}} $$
which also implies $\Pdown_S > 0$.
\end{proof}

Now we can complete the proof of \Lemma{cluster}.

\begin{proofof}{\Lemma{cluster}}
By \Lemma{cluster-induction}, we have $\frac{\Pdown_{S}}{\Pdown_{S-a}} ~\geq~ \frac{Y_{S^c}}{Y_{(S-a)^c}}$ for all $a \in S$.
Hence,
$$ \Pr[\bigcap_{i=1}^{n} \overline{E_i}] ~=~ \Pdown_{[n]}
 ~=~ \prod_{i=1}^{n} \frac{\Pdown_{[i]}}{\Pdown_{[i-1]}}
 ~\geq~ \prod_{i=1}^n \frac{Y_{[i]^c}}{Y_{[i-1]^c}}
 ~=~ \frac{1}{Y_{[n]}}.
$$
\end{proofof}

By a similar proof, it can be proved that $\frac{\qdown_{S}}{\qdown_{S-a}} ~\geq~ \frac{Y_{S^c}}{Y_{(S-a)^c}}$ for all $a \in S$,
which relates the cluster expansion lemma to Shearer's Lemma. We refer the reader to \cite{HV-arxiv}.

%%%%%%%%%%%%%%%%%%%%%%%%%%%%%%%%%%%%%%%%%%%%%%%%%%%%%%%%%%%%%%%%%%%%%%%%%%%%%%%%
\section{The Symmetric LLL}

The ``symmetric'' LLL does not use a different upper bound $p_i$
for each event $E_i$, and instead assigns $p_i = p$ for all $i$.
The question then becomes, given a dependency graph,
what is the maximum $p$ such that the conclusion of the LLL holds?
As mentioned above, Erd\H{o}s and Lov\'{a}sz \cite{ErdosLovasz} show that one may take $p=1/4d$
if the graph has maximum degree $d$.
Spencer \cite{Spencer77} showed the improved result $p = d^d/(d+1)^{d+1} > \frac{1}{e(d+1)}$,
and Shearer \cite{Shearer} improved that to the value
$p = (d-1)^{d-1}/d^d > \frac{1}{ed}$, which is optimal as $n \rightarrow \infty$.

We now show that the cluster expansion lemma (\Lemma{cluster}) gives a short proof
of the $\frac{1}{ed}$ bound, which is just slightly suboptimal.
Alternative proofs of the $(d-1)^{d-1}/d^d$ and $\frac{1}{ed}$ bounds may be found in
Knuth's exercises 323 and 325 \cite{Knuth}.

\begin{lemma}[Near-optimal symmetric LLL]
Suppose that $G$ has maximum degree $d \geq 2$ and let
$p = \max_{i \in [n]} \, \max_{J \subseteq [n] \setminus \InclNbr(i)} \,
\Pr[E_i \mid \medcap_{j \in J} \overline{E_j}]$.
If $$p \leq \frac{1}{ed}$$
then $$\Pr[ \medcap_i \overline{E_i} ] > 0.$$
\end{lemma}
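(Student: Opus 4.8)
The plan is to apply the cluster expansion lemma (\Lemma{cluster}) with the uniform choice $y_1 = \cdots = y_n = y$, where the value that works will be $y = \tfrac{1}{d-1}$ (well-defined and positive since $d\ge 2$). By the definition of $p$, condition \eqref{eq:Dep} holds with $p_i = p$ for every $i$, so the only thing left to verify is condition \eqref{eq:CLL}, namely $p \le y/Y_{\InclNbr(i)}$ for each $i \in [n]$; granting this, \Lemma{cluster} immediately yields $\Pr[\medcap_i \overline{E_i}] \ge 1/Y_{[n]} > 0$.

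So everything reduces to an upper bound on $Y_{\InclNbr(i)}$. Since $i$ is adjacent to every vertex of $\Gamma(i)$, the only independent subset of $\InclNbr(i)$ containing $i$ is $\{i\}$, while every independent subset of $\InclNbr(i)$ not containing $i$ is an independent subset of $\Gamma(i)$. Hence, with $|\Gamma(i)| \le d$ and all $y_j = y$,
\[
Y_{\InclNbr(i)} ~=~ y ~+~ \sumstack{I \subseteq \Gamma(i) \\ I \in \Ind} y^{|I|}
~\le~ y ~+~ (1+y)^{|\Gamma(i)|} ~\le~ y + (1+y)^d .
\]
Substituting $y = \tfrac{1}{d-1}$ and using $\tfrac{1}{d-1} + \bigl(\tfrac{d}{d-1}\bigr)^d = \tfrac{(d-1)^{d-1}+d^d}{(d-1)^d}$, the inequality $p \le y/Y_{\InclNbr(i)}$ is implied by $p \le \tfrac{(d-1)^{d-1}}{(d-1)^{d-1}+d^d}$. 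Thus it suffices to prove the purely numerical inequality
\[
\frac{1}{ed} ~\le~ \frac{(d-1)^{d-1}}{(d-1)^{d-1}+d^d},
\qquad\text{equivalently}\qquad
\frac{d^d}{(d-1)^{d-1}} ~\le~ ed - 1 .
\]

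This last inequality is where the improvement from $e(d+1)$ to $ed$ actually sits, and I expect it — together with identifying $y=\tfrac1{d-1}$ as the right constant — to be the main (indeed only) obstacle. Writing $\tfrac{d^d}{(d-1)^{d-1}} = d\bigl(1+\tfrac{1}{d-1}\bigr)^{d-1}$, it is equivalent to $\bigl(1+\tfrac{1}{d-1}\bigr)^{d-1} \le e - \tfrac1d$ for integers $d \ge 2$, which follows from the familiar bound $(1+1/n)^n < e$ together with a standard second-order sharpening such as $(1+1/n)^n \le e\bigl(1 - \tfrac{1}{2n+2}\bigr)$; taking $n = d-1$ gives $d\bigl(1+\tfrac{1}{d-1}\bigr)^{d-1} \le ed\bigl(1 - \tfrac{1}{2d}\bigr) = ed - \tfrac e2 < ed - 1$. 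It is worth noting that the precise constant $y = \tfrac{1}{d-1}$ is essential: replacing $(1+y)^d$ by the weaker $e^{dy}$ (or optimizing $y/(1+y)^{d+1}$ instead) would only recover the classical $p \le \tfrac{1}{e(d+1)}$ bound, so the estimate $Y_{\InclNbr(i)} \le y + (1+y)^d$ — which exploits that $i$ dominates its own neighborhood — must be kept exact.
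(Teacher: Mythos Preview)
Your proposal is correct and follows essentially the same route as the paper: apply the cluster expansion lemma with $y_i=\tfrac{1}{d-1}$, bound $Y_{\InclNbr(i)}\le y+(1+y)^d$, and reduce to the numerical inequality $\bigl(\tfrac{d}{d-1}\bigr)^{d-1}\le e-\tfrac1d$. The only difference is that you invoke the sharpening $(1+1/n)^n\le e\bigl(1-\tfrac{1}{2n+2}\bigr)$ as ``standard'', whereas the paper supplies a quick proof of exactly that inequality via the Taylor series of $-\ln(1-\tfrac1d)$.
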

\begin{proof}
We set $p_i=p$ and $y_i=y=\frac{1}{d-1}$ for all $i$,
then apply \Lemma{cluster}.
To do so, we must check that \eqref{eq:CLL} is satisfied.
Note that $Y_{\InclNbr(i)} = y + Y_{\Gamma(i)} \leq y + (1+y)^d$.
Then
$$
\frac{y_i}{Y_{\InclNbr(i)}}
    \geq \frac{y}{y+(1+y)^d}
    %= \frac{\frac{1}{d-1}}{\frac{1}{d-1}+\big(1+\frac{1}{d-1}\big)^d}
       = \frac{1}{1+\frac{d^d}{(d-1)^{d-1}}}.
$$
The claim is that this is at least $\frac{1}{ed}$.
By simple manipulations, this claim is equivalent to
\begin{equation}
\EquationName{SymmIneq}
e ~\geq~ \frac{1}{d} + \Big( \frac{d}{d-1} \Big)^{d-1},
\end{equation}
which we prove by a short calculus argument.
First we derive the bound
\begin{align*}
\ln \Big( \frac{d}{d-1} \Big)^{d-1}
\,=\, \!- (d-1) \ln \Big( 1 - \frac{1}{d} \Big)
\,=\, (d-1) \sum_{k=1}^{\infty} \frac{1}{k d^k}
\,=\, 1 - \sum_{k=1}^{\infty} \Big(\frac{1}{k}-\frac{1}{k+1}\Big) \frac{1}{d^k}
\,<\, 1 - \frac{1}{2d}.
\end{align*}
From here, we obtain
$$
\Big( \frac{d}{d-1} \Big)^{d-1}
~<~ \exp\Big( 1 - \frac{1}{2d} \Big)
~<~ e \cdot \Big( 1 - \frac{1}{2d} \Big)
~<~ e - \frac{1}{d}
$$
which establishes \eqref{eq:SymmIneq}.
\end{proof}

\bibliographystyle{plain}
\bibliography{LLL}

\end{document}